\newtheorem{thm}{Theorem}[section]
\newtheorem*{thm*}{Theorem}
\newtheorem{cor}[thm]{Corollary}
\newtheorem{lem}[thm]{Lemma}
\newtheorem{prop}[thm]{Proposition}
\theoremstyle{definition}
\newtheorem{defn}[thm]{Definition}
\newtheorem{nt}[thm]{Notation}
\newtheorem{rem}[thm]{Remark}
\newtheorem*{rem*}{Remark}
\title[Minset of an irreducible automorphism of a free group is co-compact]{The mimimally displaced set of
  an irreducible automorphism of $F_N$ is co-compact}
\author{Stefano Francaviglia}
\address{Dipartimento di Matematica of the University of
Bologna}
\email{stefano.francaviglia@unibo.it}
\author{Armando Martino}
\address{Mathematical Sciences, University of Southampton }
\email{A.Martino@soton.ac.uk}
\author{Dionysios Syrigos}
\address{Mathematical Sciences, University of Southampton }
\email{d.Syrigos@soton.ac.uk}
\begin{document}

\subjclass{20E06, 20E36, 20E08}

\begin{abstract}
We study the minimally displaced set of irreducible automorphisms of a free group. Our main result is the co-compactness of the minimally displaced set of an irreducible automorphism with exponential growth $\phi$, under the action of the centraliser $C(\phi)$. As a corollary, we get that the same holds for the action of $ <\phi>$ on $Min(\phi)$. Finally, we prove that the minimally displaced set of an irreducible automorphism of growth rate one is consisted of a single point.
\end{abstract}
\maketitle
\tableofcontents

\section{Introduction}

In this paper, we study the minimally displaced set of an irreducible automorphism of a free group $F_N$. Our goal is to produce an essentially elementary proof that an irreducible automorphism acts co-compactly on its minimally displaced set (equivalently, the train track points) in Culler-Vogtmann space. We note that stronger results were produced by  \cite{Handel-Mosher}, who showed that the action on the {\em axis bundle} is co-compact, from which one easily deduces the result. However, their result only holds for non-geometric iwip automorphisms, and we believe our argument to be significantly simpler.

Irreducible automorphisms of $F_N$ play a central role in the study of the of outer automorphism group $Out(F_N)$, as they can be studied using the powerful train track machinery, which has been introduced by Bestvina and Handel in \cite{BH-TrainTracks}. Also, they are generic elements of $Out(F_N)$ in the sense of random walks (see \cite{Rivin}).

Culler-Vogtmann Outer space $CV_N$ is considered a classical method to understand automorphisms of free groups, by studying the action of $Out(F_N)$ on $CV_N$. More recently, there is an extensive study of the so-called Lipschitz metric on $CV_N$ which seems that it has interesting applications (for example, see \cite{Kfir-Bestvina}, \cite{BestvinaBers}, \cite{FM12}, \cite{QingRafi}).

In particular, given an automorphism $\phi$, we can define the displacement of $\phi$ in $CV_N$, as $\lambda_{\phi} = \inf\{  \Lambda(X,(X)\phi) : X \in CV_N\}$ (where we denote by $\Lambda$ the asymmetric Lipschitz metric). The set of minimally displaced points (or simply Min-Set) $Min(\phi)$ has interesting properties. For instance, the first two authors studied the Min-Set in \cite{FM18II} (in the context of general deformation spaces), where they proved that it is connected. As an application they gave solutions to some decision problems for irreducible automorphisms.


For an automorphism $\phi$ the centraliser, $C(\phi)$, preserves the Min-Set, $Min(\phi)$. Our main result is the following:

\begin{restatable*}{thm}{centralcocompact}
Let $\phi$ be an irreducible automorphism of $F_N$ with $\lambda_{\phi} > 1$. The quotient space $Min(\phi) / C(\phi)$ is compact.
\end{restatable*}

As we have already mentioned, Handel and Mosher prove the previous theorem (under some extra hypotheses) in \cite{Handel-Mosher}. It is worth mentioning that some of the main ingredients of their proof is generalised by Bestina, Guirardel and Horbez in the context of deformation spaces of free products (see \cite{BGH}).

We then collect known results together, showing that the centraliser of an irreducible automorphism with $\lambda_{\phi} > 1$ is virtually cyclic, and deduce the following. 

\begin{restatable*}{thm}{autococompact}
	\label{autococom}
	Let $\phi$ be an irreducible automorphism of $F_N$ with $\lambda_{\phi} > 1$. Then $Min(\phi) / <\phi>$ is compact.
\end{restatable*}

\begin{rem*}
	Note that centralisers for iwip automorphisms are well known to be virtually cyclic, but the more general statement is also true since irreducible automorphisms of infinite order which are not iwip are geometric, and their centralisers are also geometric, hence virtually cyclic. We collect these observations in more detail in the proof of Theorem~\ref{autococom}.
\end{rem*}

In order to have a complete picture for irreducible automorphisms of a free group, we study irreducible automorphisms of growth rate one (all of these have finite order). In this case, we have the following:
\begin{restatable*}{thm}{lambdaonefix}
	Let $\phi$ be an irreducible automorphism of $F_N$ with $\lambda_{\phi} = 1$. There is a single point $T \in CV_N$ so that $Min(\phi) = Fix(\phi) = \{T\}$.
\end{restatable*}

As an application, we get the following corollary:
\begin{restatable*}{cor}{centrallambdaonefinite}
	Let $\phi$ be an irreducible automorphism of $F_N$ with $\lambda_{\phi} = 1$. There is some $T \in CV_N$ so that  $C(\phi)$ fixes $T$. In particular, $C(\phi)$ is finite.
\end{restatable*}

\section{Preliminaries}
\subsection{Culler -Vogtmann space}
For the rest of the paper, we will denote by $F_N$ the free group on $N$ generators, for some $N \geq 2$. Firstly, we will describe the construction of the Culler-Vogtmann space which is denoted by $CV_N$ and it is a space on which $Out(F_N)$ acts nicely.

Let's fix a free basis $x_1,\ldots,x_N$ of $F_N$. We denote by $R_N$ the rose with $n$-petals where we identify each $x_i$ with a single petal of $R_N$ (the petals are still denoted by $x_1,\ldots,x_N$).

\begin{defn}
A marked metric graph of rank $n$, is a triple $(T, h, \ell_T)$ so that :
\begin{itemize}
	\item $T$ is a graph (without valence one or two vertices) with fundamental group isomorphic to $F_N$. 
	\item $h: R_N \rightarrow T$ is a homotopy equivalence, which is called the marking
	\item $\ell_T : E(T) \rightarrow (0,1)$ is a metric on $T$, which assigns a positive number on each edge of $T$, with the property $\sum_e \ell_T(e) = 1$.
\end{itemize} 
\end{defn}
We are now in position to define Culler Vogtmann space:
\begin{defn}
$CV_N$ is the space of equivalence classes, under $\sim$, of marked metric graphs of rank $n$. The equivalence relation $\sim$ is given by $(T, h, \ell_T) \sim (S,h',\ell_S)$ if and only if there is an isometry $g:T \rightarrow S$ so that $gh$ is homotopic to $h'$.
\end{defn}
In order to simplify the notation, whenever $\ell_T,h$ are clear by the context, we can simply write $T$ instead of the triple $(T,h,\ell_T)$.

\begin{rem*}
	Equivalently, one may take universal covers to get a different formulation of Culler Vogtmann space; as the space of free, minimal, simplicial $F_N$ trees of volume 1, up to equivariant isometry. 
\end{rem*}

\textbf{Action of $Out(F_N)$}: Let $\phi \in Out(F_N)$ and $(T,h,\ell_T)$ be a marked metric graph. We define $((T,h, \ell_T))\phi = (T, h' , \ell_T)$ where $h ' = h \phi$ (here we still denote by $\phi$, the natural representative of $\phi$ as a homotopy equivalence from $R_N$ to $R_N$).

\textbf{Simplicial Structure.} If we fix a pair $(T,h)$ of a topological marked graph (without metric) and we consider all the different possible metrics on $E(T)$ (i.e. assignments of a positive number on each edge so that the volume of $T$ is one), we obtain an (open) simplex in $CV_N$ with dimension $|E(T)| -1$. However, if we allow some edges to have length $0$, then it is not always true that the new graph will have rank $n$, so the resulting simplex is not necessarily in $CV_N$ (note that some faces could be in $CV_N$). Therefore, $CV_N$ is not a simplicial complex, but it can be described as a union of open simplices. Note that it follows immediately by the definitions, that $Out(F_N)$ acts simplicially on $CV_N$. In the following theorem, we list some more properties which are proved in \cite{cv}.

\begin{thm*}[See {\cite{cv}}]
	
\noindent	
\begin{enumerate}[{i)}]
	\item $CV_N$ is contractible.
	\item The maximum dimension of a simplex is $3N-4$. In other words, the maximum number of edges in a marked metric graph with rank $N$, is bounded above by $3N-3$.
	\item $Out(F_N)$ acts on $CV_N$ by finite point stabilisers and the quotient space is consisted of finitely many open simplices
\end{enumerate}
\end{thm*}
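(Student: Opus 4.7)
The three parts of the theorem are essentially independent and I would address them in order of increasing difficulty. For the dimension bound (ii), one reduces to a one-line Euler characteristic computation. A connected graph $T$ with $\pi_1(T)\cong F_N$ satisfies $V(T)-E(T)=1-N$, and the hypothesis that there are no valence one or two vertices forces $\mathrm{val}(v)\geq 3$ for every $v$, so
\[
2E(T)=\sum_{v\in V(T)} \mathrm{val}(v)\;\geq\; 3V(T)\;=\;3\bigl(E(T)-N+1\bigr),
\]
which rearranges to $E(T)\leq 3N-3$. The dimension of the corresponding open simplex is $E(T)-1\leq 3N-4$.

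For part (iii), first I would show that point stabilisers are finite: if $\phi\in\Out(F_N)$ fixes $(T,h,\ell_T)$ then by definition there exists an isometry $g:T\to T$ with $gh\simeq h\phi$, and $\phi=[h^{-1}gh]$ is determined by $g$; the isometry group of a finite metric graph with no valence one or two vertices is itself finite, since isometries preserve the natural vertex set (the set of points of non-trivial valence) and hence permute the finite edge set. For finiteness of the orbits of simplices, I would fix a topological graph $T$ of rank $N$ and observe that any two markings $h_1,h_2:R_N\to T$ differ by the class $\phi:=[h_1^{-1}h_2]\in\Out(F_N)$, so $(T,h_1)$ and $(T,h_2)$ lie in the same $\Out(F_N)$-orbit of open simplices. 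Hence orbits of open simplices correspond bijectively to combinatorial types of topological graphs of rank $N$ with no valence one or two vertices, and by (ii) there are only finitely many such types.

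Contractibility (part (i)) is the substantive claim, and my plan is to follow the original argument of Culler--Vogtmann in two stages. First I would deformation retract $CV_N$ onto a lower-dimensional \emph{spine} $K_N$, defined as the geometric realisation of the poset of open simplices ordered by the face relation that corresponds to collapsing a forest in the underlying graph. The retraction is built simplex-by-simplex by radially flowing within each simplex toward the barycentre in the direction of the shortest edges; the crucial point is that collapsing a forest preserves the rank $N$, so the flow remains in $CV_N$ and glues consistently across face identifications. Second, I would prove that $K_N$ is contractible by induction on the combinatorial complexity of graphs, using Whitehead moves (ideal blow-ups and collapses of edges) to show that the stars of suitable vertices are contractible, and assembling these via Quillen's fibre lemma for posets. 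The hard part will be this second stage: the combinatorial bookkeeping required to verify that the poset has the right local homotopy type after each move. Skora's interpolation between simplicial $F_N$-trees provides an alternative route via a straight-line homotopy in the space of equivariant maps, but the technical difficulty is comparable.
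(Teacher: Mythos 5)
The paper offers no proof of this statement at all: it is quoted from Culler--Vogtmann and justified only by the citation to \cite{cv}, so there is no internal argument to compare yours against. Measured on its own terms, your treatment of (ii) and (iii) is complete and correct. The Euler characteristic count $2E(T)\geq 3V(T)=3(E(T)-N+1)$ is exactly the standard derivation of $E(T)\leq 3N-3$; and for (iii), the surjection from the (finite) isometry group of the underlying metric graph onto the point stabiliser, together with the observation that two markings of the same topological graph differ by an element of $\Out(F_N)$ and that there are only finitely many homeomorphism types of rank-$N$ graphs without valence one or two vertices (using (ii)), is precisely how both finiteness claims are established.

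Part (i) is the one place where you have a plan rather than a proof. The reduction to the spine is fine in outline, though the retraction is more accurately described as pushing away from the \emph{missing} faces of each open simplex --- those corresponding to collapsing a subgraph containing a loop, which would drop the rank --- than as flowing ``toward the barycentre in the direction of the shortest edges.'' But the contractibility of the spine is the entire content of Culler--Vogtmann's theorem, and your sketch stops exactly there, correctly identifying it as the hard part without supplying the norm/ideal-edge induction (or, alternatively, Skora's folding-path argument) that actually closes it. Since the paper itself defers wholesale to \cite{cv} for all three items, this is not a defect relative to the paper; but as a self-contained proof it is incomplete at precisely the one substantive point.
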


\textbf{Translation length.} Given a marked metric graph $T$, we would like to define the translation length of the conjugacy class of a non-trivial group element $a$ with respect to $T$. It is natural to define the length of $a$, as the sum of lengths of the edges that $a$ crosses when it is realised as a reduced loop in $T$. We define the translation length of the conjugacy class of $a \in F_N$, as  $\ell_T ([a]) = \inf \{len_T(a'): [a'] =[a]   \}$. It is easy to see that any group element is freely homotopic to an embedded loop which realises the minimum.

\begin{rem*}
	This translation length is the same as the translation length - in the sense of the minimum distance moved by the group element - of the corresponding action of the group element on the universal cover, a tree.
\end{rem*}

\textbf{Thick part.} We can now define the thick part of $CV_N$, which will be essential in our arguments:
\begin{defn}
Let $\epsilon > 0$. We define the thick part of $CV_N$, and we denote it by $CV_N (\epsilon)$, the subspace of $CV_N$ which is consisted of all the points $T \in CV_N$ with the property that every non-trivial conjugacy class $\alpha$ of $F_N$ has translation length at least $\epsilon$, i.e. $\ell_T(\alpha) \geq \epsilon$.
\end{defn}

\textbf{Centre of a simplex.} We will need the notion of a special point of a simplex, the centre, so we give the following definition:
\begin{defn}
	Let $\Delta$ be a simplex of $CV_N$. The point of $\Delta$ where all the edges have the same length is called the centre of $\Delta$ and we will denote it by $X_{\Delta}$.
\end{defn}

\begin{rem}\label{Centres}
	By the definition of the action of $Out(F_N)$ on $CV_N$, for any automorphism $\psi$, $X_{\psi(\Delta)} = \psi(X_{\Delta}) $, i.e. the centre of $\Delta$ is sent to the centre of $\psi(\Delta)$.
\end{rem}

\subsection{Stretching factor and Automorphisms}
We define a natural notion of distance on $CV_N$, which has been studied in \cite{FM11}.

\begin{defn}
Let $T,S \in CV_N$. We define the (right) stretching factor as:
\begin{equation*}
\Lambda(T,S) = \sup \{\frac{\ell_S([a])}{\ell_T ([a])}: 1 \neq a \in F_N \}
\end{equation*}
\end{defn}

In the following Proposition, we state some main properties of the stretching factor:

\begin{prop}[See {\cite{FM11}}]
	
\noindent	
\begin{enumerate}[{i)}]
	\item For any two points $T,S \in CV_N$,  $\Lambda(T,S) \geq 1$, with equality if and only if $T = S$.
	\item For any $T,S,Q \in CV_N$, the (non-symmetric) multiplicative triangle inequality for $\Lambda$ holds. In other words, $\Lambda (T,S) \leq \Lambda(T,Q) \Lambda(Q,S)$.
	\item For any $\psi \in Out(F_N)$ and for any $T,S \in CV_N$, $\Lambda(T,S) = \Lambda( (T)\psi, (S)\psi)$
\end{enumerate}
\end{prop}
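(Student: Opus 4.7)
My plan is to establish (ii) and (iii) essentially formally from the definitions, and to reserve the real work for (i), which encodes the rigidity of volume-$1$ marked metric graphs.

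For (ii), I would note that for each nontrivial conjugacy class $[a]$,
\[
\frac{\ell_S([a])}{\ell_T([a])} \;=\; \frac{\ell_Q([a])}{\ell_T([a])} \cdot \frac{\ell_S([a])}{\ell_Q([a])} \;\leq\; \Lambda(T,Q)\,\Lambda(Q,S),
\]
and the conclusion follows by taking the supremum over $[a]$. For (iii), the definition of the action is $(T,h,\ell_T)\psi = (T,h\psi,\ell_T)$, which only changes the marking; hence $\ell_{(T)\psi}([a]) = \ell_T([\psi(a)])$, and similarly for $S$. Since $\psi$ permutes conjugacy classes bijectively, substituting $b = \psi(a)$ in the supremum recovers $\Lambda(T,S) = \Lambda((T)\psi,(S)\psi)$.

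For (i), my plan is to go through an optimal equivariant Lipschitz map $f\colon \wt{T} \to \wt{S}$ on universal covers. The easy direction is that any $F_N$-equivariant $L$-Lipschitz $f$ satisfies $\ell_S([a]) \leq L\,\ell_T([a])$ for every hyperbolic $a \in F_N$: the axis of $a$ in $\wt{T}$ maps to an $\langle a\rangle$-invariant path in $\wt{S}$ containing a fundamental domain for $a$. Taking supremum gives $\Lambda(T,S)$ at most the infimum Lipschitz constant. The substantive point is to produce an equivariant PL map realising $\Lambda(T,S)$; I would invoke the Francaviglia--Martino optimal map construction from \cite{FM11}, or derive it by an Arzel\`a--Ascoli argument on the finite-dimensional space of equivariant PL maps between $\wt{T}$ and $\wt{S}$. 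Given such an $f$ with Lipschitz constant $L = \Lambda(T,S)$, the lower bound $L \geq 1$ follows from a volume count: in the quotient, the length of $f$ restricted to any edge $e$ is at most $L\cdot\ell_T(e)$, so the total image length is at most $L\cdot\vol(T/F_N) = L$; on the other hand, $F_N$-minimality of $\wt{S}$ forces $f$ to be surjective, so the total image length is at least $\vol(S/F_N) = 1$, yielding $L \geq 1$. If $L = 1$ then both inequalities are equalities, forcing $f$ to be a locally isometric surjection covering each edge of $S$ exactly once, hence an isometric bijection; so $T = S$ in $CV_N$.

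The main obstacle, and really the only nontrivial ingredient, is the existence of the optimal equivariant Lipschitz map realising $\Lambda(T,S)$: all the other steps -- the translation-length bound, the volume inequality, and the equality analysis -- are routine bookkeeping once the map is in hand. One can alternatively bypass optimal maps and directly exhibit a ``candidate'' conjugacy class whose stretch is at least $1$ via an extremal loop argument, but in the end this reduces to the same volume/minimality principle.
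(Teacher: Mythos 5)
Your proof is correct and matches the standard approach. The paper states this proposition without proof, deferring to \cite{FM11}, and your argument --- the formal change-of-variable verification for (ii) and (iii), together with the optimal Lipschitz map plus volume/surjectivity count for (i), with the equality analysis pinning down an isometry when $L=1$ --- is essentially the argument carried out in that reference (where the existence of the optimal map realising $\Lambda(T,S)$ is established via an Arzel\`a--Ascoli compactness argument, just as you propose).
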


Note that $\Lambda$ is not symmetric, i.e. there are points $T,S$ of $CV_N $ so that $\Lambda(T,S) \neq \Lambda(S,T)$. In fact, it is not even quasi-symmetric, in general. However, it is proved in \cite{Kfir-Bestvina} that for any $\epsilon > 0$, its restriction on the $\epsilon$-thick part, $CV_N (\epsilon)$, induces a quasi-symmetric function. More specifically, we have the following:
\begin{prop}[{Theorem 24, \cite{Kfir-Bestvina}}]\label{quasi-symmetry}
For every $\epsilon > 0$ and for every $T,S \in CV_N (\epsilon)$, there is a uniform constant $K $ (depending only on $\epsilon$ and $N$) so that $\Lambda(T,S) \leq \Lambda(S,T)^K$.
\end{prop}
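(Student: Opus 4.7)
The plan is to combine a compactness argument on $CV_N(\epsilon)/Out(F_N)$ with a telescoping use of the multiplicative triangle inequality for $\Lambda$, reducing the general case to a bounded-displacement regime.

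First I would handle the \emph{bounded displacement} case: fix $M > 1$, and show that for all $T, S \in CV_N(\epsilon)$ with $\Lambda(T,S) \leq M$ there is a uniform constant $K_0 = K_0(\epsilon, N, M)$ such that $\Lambda(S,T) \leq K_0$. The key input is the compactness of $CV_N(\epsilon)/Out(F_N)$: by the preceding theorem, $CV_N$ has only finitely many $Out(F_N)$-orbits of open simplices, and the thick condition $\ell_T(\alpha) \geq \epsilon$ for every nontrivial conjugacy class cuts out a compact region in each closed simplex. Since $\Lambda$ is continuous and $Out(F_N)$-invariant (part iii of the preceding proposition), the closed subset $\{(T,S) \in CV_N(\epsilon) \times CV_N(\epsilon) : \Lambda(T,S) \leq M\}$ is compact modulo the diagonal action, so the continuous function $\Lambda(S,T)$ attains a finite maximum on it.

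Next, for arbitrary $T, S \in CV_N(\epsilon)$, I would travel along a greedy folding path from $T$ to $S$ and insert intermediate points $T = T_0, T_1, \ldots, T_n = S$ with $\Lambda(T_{i-1}, T_i) \leq M$ and $n \leq \lceil \log \Lambda(T,S)/\log M \rceil$. Applying the bounded-displacement bound to each consecutive pair and invoking the multiplicative triangle inequality gives
\[
\Lambda(S,T) \;\leq\; \prod_{i=1}^n \Lambda(T_i, T_{i-1}) \;\leq\; K_0^n \;\leq\; K_0 \cdot \Lambda(T,S)^{(\log K_0)/\log M}.
\]
Swapping the roles of $T$ and $S$ then yields the required bound $\Lambda(T,S) \leq \Lambda(S,T)^K$ with $K$ depending only on $\epsilon$ and $N$ (absorbing the additive constant into $K$ at the cost of a slightly larger exponent).

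The principal obstacle is ensuring that the intermediate points $T_i$ themselves lie in some uniform thick part $CV_N(\epsilon')$, so that Step~1 applies at each step. This \emph{quasi-convexity of the thick part} along the chosen path is the real content of the argument; one expects it from the fact that folding paths are geodesic for the Lipschitz metric, so lengths of primitive classes along the path cannot drop by more than a controlled multiplicative factor determined by the total displacement. A small additional argument is needed near the diagonal $T = S$, where both $\log \Lambda(T,S)$ and $\log \Lambda(S,T)$ tend to zero: here a local computation in simplicial coordinates shows that the ratio of the two logarithms stays bounded, with bi-Lipschitz constants that remain uniform on the thick part.
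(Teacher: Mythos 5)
The paper does not prove this proposition --- it is cited directly from Theorem~24 of \cite{Kfir-Bestvina}, so there is no internal argument to compare against. Your strategy (bounded-displacement case via compactness, then telescoping along a folding path) is plausible in outline, but it has a critical gap that you flag yourself and then wave away. For the telescoping to produce a constant $K$ depending only on $\epsilon$ and $N$, the intermediate points $T_i$ must lie in a \emph{uniform} thick part $CV_N(\epsilon')$ with $\epsilon'=\epsilon'(\epsilon,N)$, independent of $\Lambda(T,S)$. Your heuristic for this --- that lengths along a folding path ``cannot drop by more than a controlled multiplicative factor determined by the total displacement'' --- is exactly the bound that does not help: if the thickness of $T_i$ is allowed to degrade with $\Lambda(T,S)$, then so is $K_0$ from Step~1, and the final exponent is no longer uniform. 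The elementary estimate one gets for free, namely $\ell_X(\gamma)\geq \ell_S(\gamma)/\Lambda(X,S)\geq\epsilon/\Lambda(T,S)$ for $X$ on the geodesic, is precisely of this useless form. Uniform quasi-convexity of the thick part along greedy folding paths is a genuine theorem, not a formal consequence of geodesy, and is arguably comparable in depth to the quasi-symmetry you are trying to establish.

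There are two further gaps. Step~1 yields an absolute bound $\Lambda(S,T)\leq K_0$ whenever $\Lambda(T,S)\leq M$, but the inequality $\Lambda(T,S)\leq\Lambda(S,T)^K$ in the regime where both $\log\Lambda(T,S)$ and $\log\Lambda(S,T)$ tend to $0$ is a local bi-Lipschitz statement that such an absolute bound does not touch; you acknowledge this but supply no argument, and it is not a trifling one (it requires uniformity over $CV_N(\epsilon)/Out(F_N)$ of the local Finsler asymmetry). Also, the compactness assertion in Step~1 needs more than finiteness of simplex orbits and compactness within one simplex: one must show that the set $\{S\in CV_N(\epsilon):\Lambda(T,S)\leq M\}$ meets only finitely many open simplices of $CV_N$, which is a properness statement about translation-length functions (only finitely many markings are compatible with the bound and with thickness). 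In particular you must not appeal to Proposition~\ref{Compactness of Balls}(ii) (compactness of in-going balls) at this point, since the paper's proof of that fact already invokes the quasi-symmetry you are trying to prove, and doing so would be circular.
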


 One could consider the function $d_R(T,S) =  \ln \Lambda (T,S)$, which behaves as an asymmetric distance, or even the symmetrised version $d(T,S) = d_R(T,S) + d_R(S,T)$, which is a metric on $CV_N$. We choose to work with $\Lambda$, as it is related more naturally to the displacement of an automorphism which is defined in the next subsection. However, as we will work only with some fixed $\epsilon$-thick part of $CV_N$, it follows by the quasi-symmetry of $\Lambda$, that we could use exactly the same arguments using $d$, instead.\\
\textbf{Balls in Outer Space.}
The asymmetry implies that there are three different types of balls that we can define, which are different, in general.
\begin{enumerate}
	\item The symmetric closed ball with centre $T \in CV_N$ and radius $r> 0$:
	\begin{equation*}
	B (T,r) = \{X \in CV_N : \Lambda(X,T) \Lambda(T,X) \leq r \}
	\end{equation*}
	\item  The in-going ball with centre $T \in CV_N$ and radius $r>0$:
	\begin{equation*}
	B_{in} (T,r) = \{X \in CV_N : \Lambda(X,T) \leq r \}
	\end{equation*}
	\item The out-going ball with centre $T \in CV_N$ and radius $r>0$:
	\begin{equation*}
	B_{out} (T,r) = \{X \in CV_N : \Lambda(T,X) \leq r \}
	\end{equation*}
\end{enumerate}

\begin{prop}\label{Compactness of Balls}
Let $T \in CV_N$ and $r > 0$.
\begin{enumerate}[{i)}]
	\item The symmetric ball $B (T,r)$ is compact.
	\item Then in-going ball $B_{in}(T,r)$ is compact.
\end{enumerate}
\end{prop}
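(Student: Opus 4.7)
The plan is to combine length-function information with the simplicial structure of $CV_N$. For (i), I first observe that since $\Lambda \geq 1$ pointwise, the condition $\Lambda(X,T)\Lambda(T,X) \leq r$ in the definition of $B(T,r)$ forces both $\Lambda(X,T) \leq r$ and $\Lambda(T,X) \leq r$. Unwinding the definition of $\Lambda$, this yields the two-sided bound
\[
\tfrac{1}{r}\,\ell_T([a]) \;\leq\; \ell_X([a]) \;\leq\; r\,\ell_T([a])
\]
for every non-trivial conjugacy class $[a]$ of $F_N$. In particular $B(T,r) \subseteq CV_N(\epsilon_0)$, where $\epsilon_0>0$ is $1/r$ times the systole of $T$.

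Given a sequence $(X_n) \subseteq B(T,r)$, a Cantor diagonal argument over the countable set of conjugacy classes produces a subsequence along which $\ell_{X_n}([a])$ converges for every $[a]$, to a value in the compact interval $[\ell_T([a])/r,\; r\,\ell_T([a])]$. To upgrade this pointwise length-function convergence to convergence of the $X_n$ inside $CV_N$, I would refine the subsequence so that all $X_n$ lie in a single open simplex $\Delta$; Bolzano--Weierstrass applied to the finitely many edge-length coordinates (which sum to $1$) then produces a limit $X_\infty$ in the compact closed simplex $\overline\Delta$. The uniform lower length bound precludes any essential loop from collapsing in the limit, so the set of edges that shrink to length $0$ does not drop the rank of the underlying graph; hence $X_\infty$ lies in a face of $\overline\Delta$ which is itself a simplex of $CV_N$. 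Continuity of edge-length coordinates within $\overline\Delta$ then gives $X_\infty \in B(T,r)$.

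For (ii), I would reduce to (i) via Proposition~\ref{quasi-symmetry}. If $X \in B_{in}(T,r)$, then $\Lambda(X,T) \leq r$ implies $\ell_X([a]) \geq \ell_T([a])/r$ for every $[a]$, placing $X$ (and $T$, since $r \geq 1$) in $CV_N(\epsilon_0)$ for the same $\epsilon_0$ as above. Quasi-symmetry then yields a constant $K = K(\epsilon_0, N)$ with $\Lambda(T,X) \leq \Lambda(X,T)^K \leq r^K$, so $B_{in}(T,r) \subseteq B(T, r^{K+1})$, which is compact by (i). Passing to limits in the inequality $\ell_T([a]) \leq r\,\ell_X([a])$ shows that $B_{in}(T,r)$ is closed, hence compact as a closed subset of a compact set.

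The main obstacle in the plan is the step used in (i) that only finitely many open simplices of $CV_N$ meet $B(T,r)$. Its proof rests on bounding the combinatorial complexity of the marked graphs appearing: there are finitely many topological types of rank-$N$ graphs with at most $3N-3$ edges, and the existence of an $r$-Lipschitz map $X \to T$ in the prescribed marking class (put in straight-on-edges form) pins down the marking of $X$ to finitely many combinatorial possibilities---each edge of $X$ is sent to an edge path in $T$ of length at most $r$, and in the $\epsilon_0$-thick graph $T$ there are only finitely many such paths starting from each vertex.
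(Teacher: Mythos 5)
Your proof of part (ii) is essentially the paper's: you place $B_{in}(T,r)$ in the thick part $CV_N(\epsilon_0)$ with $\epsilon_0 = \mathrm{sys}(T)/r$, invoke quasi-symmetry (Proposition~\ref{quasi-symmetry}) to bound $\Lambda(T,\cdot)$ as well, hence embed $B_{in}(T,r)$ in a symmetric ball, and conclude from part (i) together with closedness of $B_{in}(T,r)$. This is exactly the paper's argument, only with exponent $K+1$ in place of the paper's $2M$.

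For part (i) the paper does not argue at all; it simply cites [FM11, Theorem~4.12]. You instead attempt a direct proof, and the skeleton---thick part, reduction to finitely many simplices, Bolzano--Weierstrass in the closure of a single simplex, checking that no rank drops in the limit and that the limit remains in $B(T,r)$---is a legitimate route. Two cautions on the finiteness step you correctly identify as the crux. First, a straightened $r$-Lipschitz map $X \to T$ need not send vertices of $X$ to vertices of $T$, so the claim that each edge of $X$ maps to an edge path in $T$, and the enumeration of edge paths of length $\le r$ ``starting from each vertex'', do not yet apply; you need an extra homotopy (at a bounded additional Lipschitz cost) to make the map vertex-to-vertex before counting. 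Second, what actually makes the number of such edge paths finite is the positive minimum \emph{edge} length of the fixed graph $T$, not its $\epsilon_0$-thickness: $\epsilon_0$-thickness bounds loop lengths, and a thick graph can still have arbitrarily short edges. An alternative that avoids both issues is to argue via candidates: $\Lambda(X,T)\le r$ together with $\ell_X(\gamma)\le 2$ for every candidate $\gamma$ of $X$ gives $\ell_T(\gamma)\le 2r$, and since the fixed graph $T$ has only finitely many conjugacy classes of bounded length, this restricts the candidate data, and hence the possible simplices containing $X$, to finitely many.
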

\begin{proof}

	\noindent
\begin{enumerate}[{i)}]
	\item The statement  is proved in \cite{FM11}, Theorem 4.12.
	\item Let $T,r$ as before. Firstly note that there is some $C$ (the injective radius of $T$) for which $\ell_T([g]) \geq C > 0$ for every non-trivial $g \in F_N $. 
	Therefore, for every non-trivial $g \in F_N$ and for every $X \in B_{in}(T,r)$, it holds that:
	\begin{equation*}
 \frac{\ell_T([g])}{\ell_X([g])} \leq 	 \Lambda(X,T)  \leq r  \Rightarrow \\ 
 \ell_X([g]) \geq \frac{\ell_T([g])}{r} \geq \frac{C}{r}
	\end{equation*}
As a consequence, it follows that $B_{in}(T,r) \subset CV_N (C/r)$. By quasi-symmetry of $\Lambda$ when it is restricted on the thick part (\ref{quasi-symmetry}), we have that there is some $M$ so that $$B_{in} (T,r) \subseteq B(T,r^{2M}) $$ The result now follows by i), as $B_{in}(T,r)$ is a closed subset of the compact set $B(T,r^{2M})$ and therefore compact.
\end{enumerate}
\end{proof}

Note that it's not difficult to find an out-going ball $B_{out}(T,r)$, for some $T \in CV_N$, $r > 0$, which is not compact. It is worth mentioning that $B_{out}$ is weakly convex, while $B_{in}$ is not, in general (as  is proved in \cite{QingRafi}).

In \cite{FM11}, it is proved that the supremum in the definition of stretching factor $\Lambda(T,S)$ is maximum, as there is a hyperbolic element that realises the supremum. Even more, such a hyperbolic element can be chosen by a finite list of candidates of $T$.
\begin{defn}
Let $T \in CV_N$. A hyperbolic element $a$ of $F_N$ is called a candidate with respect to $T$, if the realisation of $a$ as a loop in $T$ is either:
\begin{itemize}
	\item an embedded circle
	\item a figure eight
	\item a barbell
\end{itemize}
The set of candidates with respect to $T$ is called the set of candidates and it is denoted by $Cand(T)$.
\end{defn}

\begin{prop}[{Prop.3.15, [\cite{FM11}}]\label{Candidates}
For any $T,S \in CV_N$, there is a candidate $a \in Cand(T)$ so that:
\begin{equation*}
\Lambda(T,S) = \frac{\ell_S([a])}{\ell_T([a])}
\end{equation*} 
\end{prop}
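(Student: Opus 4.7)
The plan is to split the proof into two steps: first, show that the supremum in the definition of $\Lambda(T,S)$ is attained by some immersed loop; second, reduce to a loop of candidate type via a combinatorial argument.

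For the first step, I invoke the existence of an optimal Lipschitz representative: a marking-preserving PL map $f\colon T\to S$ whose Lipschitz constant equals $\Lambda(T,S)$. Define the \emph{tension subgraph} $T_f\subseteq T$ as the union of closed edges on which $f$ is affine with slope exactly $\Lambda(T,S)$. A vertex-level perturbation argument shows that $T_f$ must contain an immersed loop: if $T_f$ were a forest, one could push the $f$-images of terminal vertices of $T_f$ inside $S$ so as to simultaneously decrease the slope on all tension edges meeting them, while leaving the strictly sub-maximal slopes on the remaining edges well below $\Lambda(T,S)$; this contradicts the optimality of $f$. Any immersed loop $\gamma\subseteq T_f$ traverses only maximally stretched edges, hence
\[
\frac{\ell_S([\gamma])}{\ell_T([\gamma])}=\Lambda(T,S).
\]

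For the second step, choose such a $\gamma$ minimizing $\ell_T([\gamma])$ among all immersed loops realizing the stretch factor. The decisive tool is the mediant inequality: if $\gamma$ decomposes as $\gamma=\alpha\beta$ at a shared vertex into two immersed closed sub-loops $\alpha,\beta$ each representing a nontrivial conjugacy class, then
\[
\Lambda(T,S)=\frac{\ell_S(\alpha)+\ell_S(\beta)}{\ell_T(\alpha)+\ell_T(\beta)}\;\leq\;\max\!\left(\frac{\ell_S(\alpha)}{\ell_T(\alpha)},\,\frac{\ell_S(\beta)}{\ell_T(\beta)}\right),
\]
so one of $\alpha,\beta$ still attains $\Lambda(T,S)$ while having strictly smaller $T$-length than $\gamma$, violating minimality. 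The image graph $\core(\gamma)$ is therefore constrained: a finite case analysis on the valences of its vertices, using that $\core(\gamma)$ supports an indecomposable immersed loop with no valence-one or valence-two vertex (apart from the connecting arc of a barbell), identifies $\core(\gamma)$ as an embedded circle, a figure eight, or a barbell, in each case traversed by $\gamma$ as a candidate loop.

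The main obstacle is the first step: producing the optimal Lipschitz map and verifying that its tension subgraph contains a genuine loop rather than a tree. A secondary technical point is that when we split $\gamma=\alpha\beta$ at a repeated vertex $v$, one must choose the splitting so that both factors remain immersed loops representing nontrivial elements of $F_N$; this is always possible unless at every self-intersection of $\gamma$ the incoming and outgoing branches are forced into a single turn pattern, which already forces $\gamma$ into candidate shape.
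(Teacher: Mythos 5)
The paper does not contain its own proof of this proposition; it is a citation of Proposition 3.15 of \cite{FM11}. Your proposal nonetheless follows the same general route as the standard proof (optimal Lipschitz map, tension subgraph, reduction to a short loop), but it has a genuine gap at the crucial point of the first step.

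You assert that any immersed loop $\gamma\subseteq T_f$ satisfies $\ell_S([\gamma])/\ell_T([\gamma])=\Lambda(T,S)$ simply because it traverses only maximally stretched edges. This is false. Even if every edge of $\gamma$ has slope exactly $\Lambda(T,S)$ under $f$, the image path $f\circ\gamma$ may backtrack at a vertex: $\gamma$ may take an \emph{illegal turn}, meaning two consecutive edges whose germs at the shared vertex are sent by $f$ to the same germ in $S$. Cancellation then makes the reduced representative of $[\gamma]$ in $S$ strictly shorter than $\Lambda(T,S)\cdot\ell_T(\gamma)$, and the displayed equality fails. What is required is a \emph{legal} loop in $T_f$. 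Ruling out that $T_f$ is a forest, as you do, is not sufficient; one also needs that (for a suitably chosen optimal map, e.g.\ one whose tension subgraph is minimal among optimal maps) every vertex of $T_f$ has at least two \emph{gates}, i.e.\ the $f$-images of the tension-edge directions at that vertex are not all equal. This is a separate perturbation argument at the internal vertices of $T_f$: if some vertex $v$ has a single gate, pushing $f(v)$ slightly along that common direction in $S$ strictly decreases the slope on every tension edge at $v$. Only with the two-gate property can one run a path in $T_f$ that exits each vertex through a different gate than it entered, stop when an oriented edge repeats, and extract a genuinely legal loop.

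Your second step — minimizing among realizing loops and applying the mediant inequality to force the image into a circle, figure eight, or barbell — is in the right spirit and matches the usual reduction. Two caveats: you should minimize the combinatorial number of edge crossings (so a minimum exists), not the metric length $\ell_T$; and the splitting $\gamma=\alpha\beta$ must be chosen so that both factors remain legal (not merely immersed), which is the actual content of the caveat you raise at the end. But the essential missing ingredient is the gate/legality argument; without it your first-step equality simply does not hold, and the rest of the argument has nothing to start from.
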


\textbf{Displacement.}
Now let's fix an automorphism $\phi \in Out(F_N)$. The displacement of $\phi$ is $\lambda_{\phi} = \inf \{\Lambda(X, \phi(X)) : X \in CV_N\}$. 

\begin{defn}
Let $\phi$ be an outer automorphism of $F_N$. Then we define the Min-Set of $\phi$, as $Min(\phi) = \{T \in CV_N : \Lambda(T,\phi(T)) = \lambda_{\phi} \}$.
\end{defn}
Note that $Min(\phi)$ could be empty, in general. However, it is non-empty when $\phi$ is irreducible (see \cite{FM15} for more details).

\begin{defn}
Let $\phi$ be an automorphism of $F_N$. Then $\phi$ is called reducible, if there is a free product decomposition of $F_N = A_1 \ast \ldots \ast A_k \ast B $ (where $B$ could be empty) so that every $A_i$, $i=1,\ldots,k$ is proper free factor and the conjugacy classes of $A_i$'s are permuted by $\phi$. Otherwise, $\phi$ is called irreducible.
\end{defn}

We note that by [Th.8.19, \cite{FM15}], $Min(\phi)$ coincides with the set of train track points of $\phi$, when $\phi$ is irreducible. That is, the points of $CV_N$ that admit a (not necessarily simplicial) train track representative of $\phi$.

\subsection{Properties of the Thick Part}
In this subsection, we list some properties of the thick part that we will need in the following section.

\begin{prop}\label{Thickness1}
Let $\phi$ be an irreducible automorphism of $F_N$. Then there is a positive number $\epsilon_1 $ (depending only on $N$ and on $\phi$) so that $Min(\phi) \subset CV_N (\epsilon_1)$. One could take $\epsilon_1 = 1/ ((3N-3)\mu^{3N-2})$ for any $\mu > \lambda_{\phi}$.
\end{prop}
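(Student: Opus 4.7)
The strategy is a contradiction argument. Assume there is a point $T\in \Min(\phi)$ and a non-trivial conjugacy class $\alpha_0 \in F_N$ satisfying $\ell_T(\alpha_0)<\epsilon_1$, and aim to extract a proper $\phi$-invariant free factor system of $F_N$, contradicting the irreducibility of $\phi$.

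The first step is quantitative. Since $T\in\Min(\phi)$, $\Lambda(T,T\phi)=\lambda_\phi$, and the identity $\ell_{T\phi}(\alpha)=\ell_T(\phi(\alpha))$ then gives $\ell_T(\phi(\alpha))\le \lambda_\phi\ell_T(\alpha)$ for every conjugacy class; iterating, $\ell_T(\phi^k(\alpha_0))<\mu^k\epsilon_1$ for every $k\ge 0$. Let $Y_k\subseteq T$ be the support subgraph of the reduced loop representing $\phi^k(\alpha_0)$; since this loop traverses each edge of $Y_k$ at least once, $\ell_T(Y_k)\le \ell_T(\phi^k(\alpha_0))$. For $k\in\{0,1,\dots,3N-3\}$ the choice of $\epsilon_1$ yields
\[
\ell_T(Y_k)<\mu^{3N-3}\epsilon_1=\frac{1}{(3N-3)\mu}<\frac{1}{3N-3}.
\]
Because a graph in $CV_N$ has at most $3N-3$ edges summing to $1$, some edge $e^*$ satisfies $\ell_T(e^*)\ge 1/(3N-3)$, forcing $e^*\notin Y_k$ for these $k$. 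So the $3N-2$ iterates $\alpha_0,\phi(\alpha_0),\dots,\phi^{3N-3}(\alpha_0)$ are all carried by the proper subgraph $T':=T\setminus\{e^*\}$, hence all lie in the proper free factor system $\mathcal F'$ of $F_N$ determined by the components of $T'$.

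The second step converts this into a $\phi$-invariant statement. Letting $\mathcal F^*_k$ denote the smallest free factor system containing $\alpha_0,\phi(\alpha_0),\dots,\phi^k(\alpha_0)$, one has an ascending chain $\mathcal F^*_0\subseteq\mathcal F^*_1\subseteq\cdots$ with $\mathcal F^*_k\subseteq\mathcal F'$ for $k\le 3N-3$. Invoking the standard finite-height property of the poset of free factor systems of $F_N$ (chains have length bounded linearly in $N$, at most $3N-3$), the chain must stabilise within this range: there is some $k_0\le 3N-3$ with $\mathcal F^*_{k_0}=\mathcal F^*_{k_0+1}$. Writing $\mathcal F^*:=\mathcal F^*_{k_0}$, it then contains $\phi(\alpha_0),\dots,\phi^{k_0+1}(\alpha_0)$, so by the minimality defining $\phi(\mathcal F^*)$ we get $\phi(\mathcal F^*)\subseteq\mathcal F^*$, and by induction every $\phi^j(\alpha_0)$ lies in $\mathcal F^*$. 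The descending chain $\mathcal F^*\supseteq\phi(\mathcal F^*)\supseteq\phi^2(\mathcal F^*)\supseteq\cdots$ stabilises, by the same finite-height argument, to a $\phi$-invariant proper free factor system, contradicting irreducibility.

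The main obstacle is the finite-height argument: making precise the claim that the ascending and descending chains above stabilise within the guaranteed range $k\le 3N-3$. A cleaner self-contained alternative exploits that $T\in \Min(\phi)$ with $\phi$ irreducible implies $T$ is a train track point \cite{FM15}: the transition matrix $M$ of a train track representative $f\colon T\to T$ is then an irreducible non-negative integer matrix of size $n=|E(T)|\le 3N-3$ with Perron eigenvalue $\lambda_\phi$ and positive eigenvector $\ell_T$. Irreducibility of $M$ gives $(M^k)_{e,e'}\ge 1$ for some $k\le n-1$, and then $M^k\ell_T=\lambda_\phi^k\ell_T$ forces $\ell_T(e)\ge \ell_T(e')/\lambda_\phi^{n-1}$ for any two edges; taking $e'$ to maximise $\ell_T(e')$ (which is $\ge 1/(3N-3)$) gives $\ell_T(e)\ge 1/((3N-3)\lambda_\phi^{3N-4})$. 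Since every non-trivial conjugacy class traverses at least one edge, $\ell_T(\alpha)\ge 1/((3N-3)\lambda_\phi^{3N-4})\ge \epsilon_1$, using $\mu^{3N-2}\ge\lambda_\phi^{3N-4}$ (valid because $\mu>\lambda_\phi\ge 1$).
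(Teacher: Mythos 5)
The paper itself does not prove this proposition; it simply cites Proposition~10 of \cite{BestvinaBers}, where Bestvina establishes exactly this statement with exactly this constant $\epsilon_1$, using an argument along the lines of your first approach (short conjugacy class, bounded iterates supported in a proper subgraph, pass to an invariant proper free factor system, contradict irreducibility).

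Your first approach is therefore essentially the paper's (i.e.\ Bestvina's) argument, and it does work. The ``main obstacle'' you flag --- the finite-height bound for chains of proper free factor systems --- is genuinely resolvable: using the complexity $\mathrm{cx}(\{[A_1],\dots,[A_k]\}) = \sum_i (2\,\mathrm{rank}(A_i)-1)$, one checks that $\mathcal F \sqsubsetneq \mathcal F'$ forces $\mathrm{cx}(\mathcal F) < \mathrm{cx}(\mathcal F')$, and $\mathrm{cx}$ of a proper free factor system lies in $\{1,\dots,2N-2\}$, so a strictly ascending chain of proper free factor systems has at most $2N-2 \le 3N-3$ terms. Hence the chain $\mathcal F^*_0 \sqsubseteq \cdots \sqsubseteq \mathcal F^*_{3N-3}$ of proper free factor systems must have a repetition $\mathcal F^*_{k_0} = \mathcal F^*_{k_0+1}$ with $k_0 \le 3N-4$, and the rest of your argument (deducing $\phi(\mathcal F^*)\sqsubseteq \mathcal F^*$ and passing to a $\phi$-invariant proper free factor system by the descending chain condition) is correct. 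In short: your first approach is the paper's approach, and it goes through once you supply the standard complexity bound.

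Your second (train-track) approach, which you present as the cleaner alternative, actually has the more substantive gap. You assert that the transition matrix of a train track representative on $T\in\Min(\phi)$ is an irreducible \emph{integer} matrix of size $|E(T)| \le 3N-3$. But the cited characterization (Th.~8.19 of \cite{FM15}, quoted in this paper) only says that each $T\in\Min(\phi)$ admits a ``\emph{not necessarily simplicial}'' train track representative --- in particular the map need not send vertices to vertices. For such a representative the edge-to-edge transition data is not an integer matrix on the $\le 3N-3$ edges of $T$; passing to a subdivision that makes the map a topological representative adds valence-two vertices, inflating the matrix size beyond $3N-3$ and spoiling the numeric estimate you derive. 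So as written, the Perron--Frobenius estimate $(M^k)_{e,e'}\ge 1$ for $k\le n-1$ is not justified for every $T\in\Min(\phi)$. The approach can likely be salvaged (e.g.\ by bounding the subdivided matrix size or by a volume argument on $f^k(e)$), but it does not yet deliver the stated constant $\epsilon_1 = 1/((3N-3)\mu^{3N-2})$ for all of $\Min(\phi)$, so you should prefer the free-factor-system route.
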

\begin{proof}
For a proof see Proposition 10 of \cite{BestvinaBers}.
\end{proof}

\begin{lem}\label{Thickness2}
Let $\Delta$ be a simplex of $CV_N$ and $T,S$ be two points of $\Delta$. If we further suppose that $T \in CV_N (\epsilon)$, then $\Lambda(T,S) \leq \frac{2}{\epsilon}$.
\end{lem}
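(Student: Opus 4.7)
The plan is to exploit the fact that $T$ and $S$ sit in the same open simplex, so they share the same underlying topological marked graph, and use the candidates proposition (Proposition \ref{Candidates}) to reduce the problem to estimating lengths of very specific loops.

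First I would write $T = (T_0, h, \ell_T)$ and $S = (T_0, h, \ell_S)$ with common edge set $E(T_0) = \{e_1, \ldots, e_k\}$, and set $a_i = \ell_T(e_i)$, $b_i = \ell_S(e_i)$, so that $\sum a_i = \sum b_i = 1$. For any reduced loop $\gamma$ in $T_0$, if $n_i(\gamma)$ denotes the number of times $\gamma$ traverses $e_i$, then $\ell_T(\gamma) = \sum n_i(\gamma) a_i$ and $\ell_S(\gamma) = \sum n_i(\gamma) b_i$, since collapsing any subset of edges simply removes those terms from both sums.

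Next I would invoke Proposition \ref{Candidates} to pick a candidate $\alpha \in \mathrm{Cand}(T)$ realizing the stretching factor, so that
\[
\Lambda(T,S) = \frac{\ell_S(\alpha)}{\ell_T(\alpha)}.
\]
The key observation is that, by the definition of a candidate, $\alpha$ is an embedded circle, a figure eight, or a barbell. In the first two cases each edge is crossed at most once; in the barbell case each edge of the central arc is crossed exactly twice and the remaining edges at most once. In all cases $n_i(\alpha) \leq 2$ for every $i$.

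Using this bound, I would estimate the numerator and denominator separately. For the numerator,
\[
\ell_S(\alpha) = \sum_i n_i(\alpha)\, b_i \leq 2 \sum_i b_i = 2,
\]
since the total $\ell_S$-volume equals $1$. For the denominator, $\alpha$ represents a non-trivial conjugacy class and, being embedded/figure-eight/barbell, is cyclically reduced, so $\ell_T(\alpha) = \ell_T([\alpha]) \geq \epsilon$ because $T \in CV_N(\epsilon)$. Combining,
\[
\Lambda(T,S) = \frac{\ell_S(\alpha)}{\ell_T(\alpha)} \leq \frac{2}{\epsilon},
\]
which is the desired inequality.

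I do not anticipate a real obstacle here; the only subtlety is making sure that when one passes between $T$ and $S$ the multiplicities $n_i$ genuinely agree (both read off from the common topological graph $T_0$), and that a candidate in $T$ can indeed be treated as a reduced loop whose length realizes the translation length of its conjugacy class in $T$. Both are immediate from the definitions of the simplicial structure on $CV_N$ and of candidates.
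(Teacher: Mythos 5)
Your proof is correct and follows essentially the same route as the paper: pick a candidate realizing $\Lambda(T,S)$, note that candidates cross each edge at most twice so the numerator is at most $2$, and use $\epsilon$-thickness of $T$ to bound the denominator below by $\epsilon$. You just spell out the multiplicity bookkeeping a bit more explicitly than the paper does.
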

\begin{proof}
The proof is an immediate corollary of the candidates theorem (\ref{Candidates}). Firstly, note that candidates depend only on the simplex and so for any $T,S \in \Delta$, $Cand(T) = Cand(S) = \mathcal{C}$. The result now follows by the remark that for every $g \in \mathcal{C}$ we have $\ell_S ([g]) \leq 2$, while for every $1 \neq g$, $\ell_T ([g]) \geq \epsilon$  as $T \in CV_N(\epsilon)$.
\end{proof}

\begin{rem}\label{Thickness of centre}
Let $\Delta$ be any simplex of $CV_N$, then there is some uniform positive number $\epsilon_2$ (depending only on $N$), so that the centre of $\Delta$ is $\epsilon_2$-thick. One could take, $\epsilon_2 = 1 / (3N-3)$. Therefore, it is easy to see $\epsilon_1 < \epsilon_2$ and so the centre of any simplex is $\epsilon_1$-thick.
\end{rem}
\begin{proof}
The number of edges in any graph is bounded above by $3N-3$. Therefore the length of each edge with respect to the centre of a simplex, is bounded below by $1/(3N-3)$. The same number gives us an obvious lower bound for the translation length of any hyperbolic element.
\end{proof}

\subsection{Connectivity of the Min-set}
In this subsection, we will state some results from \cite{FM18I} and \cite{FM18II}, that we will need in the following section.

\begin{defn}
Let $T,S \in CV_N$. A \textit{simplicial path} between $T,S$ is given by:
\begin{enumerate}
	\item A finite sequence of points $T = X_0, X_1,\ldots, X_k = S$, such that for every $i = 1,\ldots, k$, there is a simplex $\Delta_i$ such that the simplices $\Delta_{X{i-1}}$
	and $\Delta_{X_{i}}$ are both faces of $\Delta_i$.
	
	\item Euclidean segments $\overline{X_{i-1} X_i}$.
\end{enumerate}
The simplicial path is then the concatenation of these Euclidean segments. 
\end{defn}

The following results were proved in \cite{FM18I} and \cite{FM18II}, in the more general context of deformation spaces of free products.
\begin{thm} \label{Connectivity}
	Let $\phi$ be an automorphism of $F_N$.
	\begin{enumerate}
		\item Let $\Delta$ be a simplex of $CV_N$. If $X,Y \in Min(\phi) \cap cl(\Delta)$, where $cl(\Delta) $ is the closure of $\Delta$ in $CV_N$. Then the Euclidean segment $\overline{XY}$ is contained to $Min(\phi)$.
		\item If $\phi$ is irreducible automorphism of $F_N$, then $Min(\phi)$ is connected by simplicial paths in $CV_N$; that is, for every $T,S \in Min(\phi)$ there is a simplicial path $T$ and $S$, which is entirely contained in $Min(\phi)$.
	\end{enumerate}
\end{thm}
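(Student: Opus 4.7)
The plan is to prove the two parts separately. Part (1) is a direct convexity argument inside one simplex, using that translation length is an affine function of the simplex coordinates. Part (2) is a global statement and the hard part; the strategy is to reduce to part (1) by producing a finite chain of simplices along which the displacement stays constant, using the train-track characterization of $Min(\phi)$ for irreducible $\phi$.

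For part (1), fix $X, Y \in Min(\phi) \cap cl(\Delta)$ and parametrise the segment by $Z_t = tX + (1-t)Y$. The underlying topological graph of $\Delta$ is the same for every point in $cl(\Delta)$, so realising any conjugacy class $[a]$ as a reduced loop gives a fixed combinatorial path whose length is a linear combination of the edge lengths. Hence
\[
\ell_{Z_t}([a]) = t\,\ell_X([a]) + (1-t)\,\ell_Y([a]).
\]
Since the action of $\phi$ only changes the marking (the graph and edge-length function are preserved), $\phi$ sends $\Delta$ affinely to $\phi(\Delta)$, and $\phi(Z_t) = t\phi(X) + (1-t)\phi(Y)$. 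The same linearity then gives
\[
\ell_{\phi(Z_t)}([a]) \,=\, t\,\ell_{\phi(X)}([a]) + (1-t)\,\ell_{\phi(Y)}([a]) \,\leq\, \lambda_\phi\bigl(t\,\ell_X([a]) + (1-t)\,\ell_Y([a])\bigr) \,=\, \lambda_\phi\,\ell_{Z_t}([a]),
\]
using that $X,Y \in Min(\phi)$. Taking the supremum over $[a]$ gives $\Lambda(Z_t, \phi(Z_t)) \leq \lambda_\phi$, hence equality, so $Z_t \in Min(\phi)$.

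For part (2), I would use the characterization (Th.8.19 of \cite{FM15}) that $Min(\phi)$ is exactly the set of train-track points when $\phi$ is irreducible. Given $T, S \in Min(\phi)$, the idea is to produce a finite sequence $T = X_0, X_1, \ldots, X_k = S$ of train-track points such that each consecutive pair $X_{i-1}, X_i$ lies in the closure of a common simplex; part (1) then provides the Euclidean edges and concatenating them yields the required simplicial path. To build such a chain, one starts at $T$ with an optimal train-track representative $f_T : T \to T$ of $\phi$; the combinatorics of the train-track structure (legal turns, maximally stretched illegal turns) single out a canonical direction in which to move $T$ inside its simplex or onto a face of an adjacent simplex, while keeping the displacement equal to $\lambda_\phi$. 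Iterating these ``peak-reduction'' moves one obtains a sequence of train-track points, and a finiteness/compactness argument (the set of train-track simplices modulo $\langle \phi \rangle$ is finite, and $Min(\phi)$ lies in the thick part by Proposition \ref{Thickness1}) guarantees that the procedure can be steered to terminate at $S$.

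The main obstacle is the second part: showing that the combinatorial moves can be organised so that the displacement remains exactly $\lambda_\phi$ throughout, and that $T$ can actually be connected to any other train-track point $S$ rather than just to some canonical representative of its combinatorial class. Irreducibility is used decisively here, both to guarantee the existence of train-track representatives at every point of $Min(\phi)$ and to ensure that the optimal stretching direction is compatible with the train-track folds, so that neither the train-track property nor the equality $\Lambda(X_i, \phi(X_i)) = \lambda_\phi$ is destroyed by the moves. Given that, part (1) does all the remaining geometric work of producing actual Euclidean paths from the discrete chain of simplices.
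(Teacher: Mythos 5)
Your proof of part~(1) is correct and in fact more self-contained than what the paper does: the paper simply cites the quasi-convexity of the displacement function (Lemma~6.2 of \cite{FM18I}), and your argument is essentially a direct re-derivation of that lemma on the closed simplex. Two small points worth making explicit. First, the segment $\overline{XY}$ really does stay inside $CV_N$: the edges of zero length at $Z_t$ for $0<t<1$ are exactly those of zero length at both $X$ and $Y$, and the intersection of two forests is a forest, so $Z_t$ has the correct rank. Second, your appeal to linearity of $\ell_\bullet([a])$ needs the observation that a reduced loop for $[a]$ in the \emph{open} simplex remains a (non-backtracking) loop in every face of $cl(\Delta)$; with that noted, the computation and the conclusion $\Lambda(Z_t,\phi(Z_t))\le\lambda_\phi$ are correct, and equality follows since $\lambda_\phi$ is the infimum.

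Part~(2) is where the proposal falls short. You sketch a ``peak-reduction'' or ``folding along illegal turns'' procedure to build a finite chain of train-track points connecting $T$ to $S$ through shared simplex closures, and you correctly identify the two things such a procedure must deliver: that the displacement stays exactly $\lambda_\phi$ at every step, and that the procedure can be \emph{steered} to arrive at an arbitrary prescribed target $S$ rather than merely to some canonical point in a combinatorial class. But you do not actually supply either ingredient, and you acknowledge this yourself in the final paragraph. That is the entire content of the statement, so as written this is a description of a plausible strategy rather than a proof. The paper's route is different and, crucially, complete: it invokes the Main Theorem of \cite{FM18II} (Theorem~5.3 there), which proves connectedness by simplicial paths of the minimally displaced set inside the \emph{simplicial bordification} of $CV_N$ (the free splitting complex), and then uses Proposition~\ref{Thickness1} --- that $Min(\phi)$ lies in a fixed thick part when $\phi$ is irreducible --- to rule out the path ever touching the boundary, so that it stays in $CV_N$. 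Your approach would sidestep the bordification argument by working only in $CV_N$, which is appealing, but it replaces one hard step by another (the peak-reduction machinery) without filling it in. To make part~(2) rigorous you would either need to reproduce the argument of \cite{FM18II}, or cite it and then add the thick-part argument the paper uses.
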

\begin{proof}
	
\noindent	
\begin{enumerate}
	\item This is an immediate consequence of the quasi-convexity of the displacement function, see Lemma 6.2 in \cite{FM18I}.
	\item This follows by the Main Theorem of \cite{FM18II} (see Theorem 5.3.) which implies that the set of minimally displaced points for $\phi$ as a subset of the free splitting complex (i.e. the simplicial boardifiaction of $CV_N$) of $F_N$ is connected by simplicial paths, combined with the fact that the Min-Set of an irreducible automorphism does not enter some thin part of $CV_N$ (see the previous subsection). It is easy to see now that, by the connectivity, it's not possible to have minimally displaced points in the boundary without entering any thin part. As a consequence, any minimally displaced point of the free splitting complex must be a point of $CV_N$ and the result follows.
\end{enumerate}
	
\end{proof}

\section{Results}
\subsection{Exponential growth}
Firstly, we will concentrate on the case where $\phi$ is irreducible with exponential growth. In this case, we have the following theorem:

\centralcocompact

\begin{proof}

We will prove that there is a compact set $\mathcal{K}$ of $CV_N$ so that $Min(\phi) \subseteq \mathcal{K} C(\phi)$ and the theorem follows, as $Min(\phi)$ is closed. Therefore as in-going balls are compact (by \ref{Compactness of Balls}), the compact set $\mathcal{K}$ can be chosen to be an in-going closed ball. Let's fix a point $T \in Min(\phi)$. It is then sufficient to prove that there is some positive radius $L$ so that for any $X \in Min(\phi)$, there is some element $\alpha$ of $C(\phi)$ which satisfies $\Lambda(X, (T)\alpha) \leq L$.

We will argue by contradiction. Let's suppose that there is a sequence of points $X_m \in Min(\phi), m=1,2,3,\ldots$, so that $\Lambda(X_m, (T)\alpha) \geq m$, $m=1,2,\ldots$ and for every $\alpha \in C(\phi)$.

Note that there are finitely many $Out(F_N)$- orbits of open simplices on $CV_N$. Therefore, up to taking a subsequence of $X_m$, we can suppose that there is an (open) simplex $\Delta$ and a sequence of (difference of markings) $\psi_m \in Out(F_N)$ so that $X_m \in (\Delta) \psi_m$. 

Firstly, we apply Proposition \ref{Thickness1} for $\phi$ and we get a constant $\epsilon$ (depending on $N$ and on $\phi$), so that $Min(\phi) \subset CV_N(\epsilon)$. Also, by the Remark \ref{Thickness of centre}, the centre of any simplex is $\epsilon$-thick. In particular, the centre $X_{\Delta}$ of $\Delta$ belongs to $CV_N (\epsilon)$ and $(X_m)\psi_m \in \Delta$, so by \ref{Thickness2} there is a constant $M = M(\epsilon) $ (which doesn't depend on $\Delta$ or $m$) which satisfies $\Lambda(X_{\Delta}, (X_m)\psi_m ^{-1} ) \leq M$. It follows that $\Lambda(X_{m},(X_{\Delta})\psi_m  ) \leq M$ for every $m$.

In addition, by assumption, $X_m \in Min(\phi)$ which is equivalent to $\Lambda(X_m,(X_m)\phi) =  \lambda_{\phi} = \lambda$. Therefore, as an easy application of the multiplicative triangle inequality for $\Lambda$ and the fact that $Out(F_N)$ acts on $CV_N$ by isometries with respect to $\Lambda$, the previous relations imply that:
$$
\begin{array}{rcl}
\Lambda((X_{\Delta})\psi_m, (X_{\Delta} ) \psi_m \phi) & \leq &  \Lambda((X_{\Delta})\psi_m, X_m) \Lambda(X_m, (X_m)\phi) \Lambda ((X_m)\phi, (X_{\Delta}) \psi_m \phi )
 \\ \\  & = & \Lambda(X_{\Delta}, (X_m)\psi_m ^{-1}) \Lambda(X_m, (X_m)\phi)  \Lambda (X_m,  (X_{\Delta})\psi_m )  \\ \\ 
  & \leq &  M^2  \lambda.
\end{array}
$$

The previous inequality is equivalent to $\Lambda( X_{\Delta}, (X_{\Delta})\psi_m  \phi \psi_m^{-1} ) \leq M^2  \lambda$ for every $m$. Note that by the Remark \ref{Centres}, $(X_{\Delta})\psi_m  \phi \psi_m^{-1}$ are the centres of the corresponding simplices $(\Delta)\psi_m  \phi \psi_m^{-1}$ for every $m$.

As $CV_N$ is locally finite, there are finitely many simplices so that their centers have bounded distance from $X_{\Delta}$. Therefore, it follows that infinitely many of the simplices $ (\Delta)\psi_n \phi \psi_n^{-1}$,$n=1,2,\ldots$ must be the same, which means that after possibly taking a subsequence of $\psi_n$, we have that $  (X_{\Delta}) \psi_n  \phi \psi_n^{-1}   = (X_{\Delta})\psi_m  \phi \psi_m^{-1}  $ for every $n,m$. As a consequence, the automorphisms  $(\psi_n \phi \psi_n^{-1})(\psi_1  \phi^{-1} \psi_1^{-1})$ fix $X_{\Delta}$  for every $n$.

On the other hand, the stabiliser of any point of $CV_N$ is finite and so infinitely many of these automorphisms are forced to be the same or, in other words, after taking a subsequence, we can suppose that $\psi_m \phi \psi_m^{-1} = \psi_n \phi \psi_n^{-1}$ for every $n,m$. This is equivalent to $\psi_m^{-1} \psi_n \in C(\phi)$ for every $n,m$ and in particular, by fixing one of the indices to be $m=1$, we get that for every $n$ there is some $\alpha_n \in C(\phi)$, so that  $\psi_n =  \psi_1 \alpha_n$.

As a consequence, the $\Lambda$-distance from $(X_{\Delta})\psi_n$ to  $(T)\alpha_n$ does not depend on $n$, as 
$$\Lambda( (X_{\Delta})\psi_n, (T)\alpha_n) = \Lambda( (X_{\Delta}) \psi_1 \alpha_n , (T) \alpha_n) = \Lambda( (X_{\Delta})\psi_1 , T) = C.$$ 

Note that as we have already seen $\Lambda(X_n, (X_{\Delta})\psi_n) \leq M$ for every $n$. Therefore, by applying again the triangle inequality, it follows that $\Lambda( X_n, (T)\alpha_n) \leq M  C$ for every $n$. This contradicts our assumption that $\Lambda( X_n, (T)\alpha_n) \geq n$, as neither $M$ nor $C$ depend on $n$ (note that here by an abuse of the notation, where we still denote by $X_n$ a subsequence $X_{k_n}$ of the original $X_n$, but the inequality still holds because  $k_n \geq n$ for every $n$).
\end{proof}



\begin{rem}
In the previous proof we used irreducibility only in order to ensure the condition that there is some uniform $\epsilon$ so that $Min(\phi) \subset CV_N (\epsilon)$ which follows by the Proposition \ref{Thickness1}. Therefore, we could replace the assumption of irreducibility with this weaker condition.
\end{rem}

For the centralisers of irreducible automorphims with exponential growth, the following theorem holds:


\autococompact

\begin{proof}
	
In light of the previous theorem, it is sufficient to prove that the centraliser of an irreducible automorphism of exponential growth rate is virtually cyclic. 	
	
The case when $\phi$ is irreducible with irreducible powers (iwip) this is well known, by the main result of \cite{BFH-Laminations0}. Note that if $\phi$ is atoroidal and irreducible with $\lambda_{\phi} > 1$, it follows by \cite{Kapovich} that $\phi$ is iwip.

As a consequence, we suppose for the rest of the proof that $\phi$ is toroidal and irreducible. In this case, $\phi$ is a geometric automorphism, i.e. it is induced by a pseudo-Anosov automorphism $f$ of a surface $\Sigma$ with $p\geq1$ punctures,  which acts transitively on the boundary components (this was a folk theorem, until recently where the details appeared in an appendix of \cite{Mut}).
Note that $\phi$ is iwip exactly when $p=1$.

It is well known (see \cite{McCarthy}) that the centraliser $C_{MCG(\Sigma)}(f)$ of the pseudo-Anosov $f$ in $MCG(\Sigma)$ is virtually cyclic. Therefore, it is enough to prove that the centraliser $C(\phi) = C_{Out(F_n)}(\phi)$ of $\phi$ in $Out(F_n)$ is isomorphic to $C_{MCG(\Sigma)} (f)$.

Let's denote by $c_1,\ldots,c_p$ the elements corresponding to the peripheral curves (a simple curve around each puncture). We also denote by $ Out^*(F_n)$, the subgroup of the automorphisms that preserve the set of conjucacy classes of simple peripheral curves (which are $c_i$'s and their inverses). By the Dehn–-Nielsen–-Baer Theorem for surfaces with punctures (see Theorem 8.8. of \cite{FarbMarg}), we have that the natural map from $MCG(\Sigma)$ to $Out^*(F_n)$ is an isomorphism. In other words, an automorphism $\psi$ of $Out(F_n)$ is induced by an element of $MCG(\Sigma)$, exactly when it preserves the set of conjugacy classes of the peripheral curves, or equivalently $\psi \in Out^*(F_n)$.

We will now show that any element of $C(\phi)$ is induced by an element of $C_{MCG(\Sigma)}(f)$ and the proof follows, as any element of $C_{MCG(\Sigma)} (f)$ induces an element of $C(\phi)$.
We can assume that without loss, after re-numbering if needed, that $[\phi(c_i)] = [c_{i+1}]$ mod $p$. If $\psi \in C(\phi)$, then for every $i$, $[\phi \psi (c_i)] = [\psi \phi (c_i) ]= [\psi (c_{i+1})] $ (mod $p$). It follows that $f$ preserves the closed curves $[\psi(c_i)]$ and since $f$ is pseudo-Anosov, we get that any such curve must be a peripheral curve, up to orientation (proper powers can be discounted as $\psi$ is an automorphism). Therefore, $\psi$ must preserve the set of (conjugacy classes) of the peripheral curves and so by the previous criterion, it is induced by an element of $MCG(\Sigma)$.

\end{proof}

As an immediate application of the previous two theorems, we get the following statement which seems to be known to the experts, but it doesn't appear explicitly in the literature:
\begin{thm}
Let $\phi$ be an irreducible automorphism of $F_N$ with $\lambda_{\phi} > 1$. It holds that $Min(\phi) / <\phi>$ is compact.
\end{thm}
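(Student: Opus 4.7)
The plan is to deduce this directly from the cocompactness of the $C(\phi)$-action on $Min(\phi)$ proved above, by showing that $\langle \phi \rangle$ has finite index in $C(\phi)$. This second ingredient is already assembled inside the proof of Theorem \ref{autococom} (which, in fact, already contains this very statement), so the result is essentially a repackaging of what has just been proved.

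First I would apply the $C(\phi)$-cocompactness theorem to obtain a compact set $\mathcal{K} \subset CV_N$ with $Min(\phi) \subseteq \mathcal{K} \cdot C(\phi)$. Next I would recall from the proof of Theorem \ref{autococom} that $C(\phi)$ is virtually cyclic: via \cite{BFH-Laminations0} in the iwip case, and via the identification $C(\phi) \cong C_{MCG(\Sigma)}(f)$ (together with \cite{McCarthy}) in the toroidal geometric case. Since $\lambda_{\phi} > 1$, the element $\phi$ has infinite order, so $\langle \phi \rangle \cong \Z$ is an infinite cyclic subgroup of a virtually cyclic group; any such subgroup meets a chosen finite-index $\Z$ subgroup $H \leq C(\phi)$ in a common subgroup of finite index in both, forcing $[C(\phi) : \langle \phi \rangle] < \infty$.

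Finally, let $g_1, \dots, g_n$ be coset representatives for $\langle \phi \rangle$ in $C(\phi)$ and set
\[
\mathcal{K}' \;=\; \bigcup_{i=1}^n \mathcal{K} \cdot g_i.
\]
Since $Out(F_N)$ acts on $CV_N$ by homeomorphisms, each translate $\mathcal{K} \cdot g_i$ is compact, hence so is $\mathcal{K}'$; moreover
\[
\mathcal{K}' \cdot \langle \phi \rangle \;=\; \mathcal{K} \cdot C(\phi) \;\supseteq\; Min(\phi).
\]
Thus $\mathcal{K}' \cap Min(\phi)$ is a compact subset of $Min(\phi)$ whose $\langle \phi \rangle$-orbit covers $Min(\phi)$, and $Min(\phi)/\langle \phi \rangle$ is compact. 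The only real content is the virtual cyclicity of $C(\phi)$, which is a bookkeeping assembly of known centraliser results; given that and Theorem \ref{centralcocompact}, the remainder of the argument is purely formal.
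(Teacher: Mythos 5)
Your proof is correct and follows essentially the same route as the paper: the paper's own proof of Theorem~\ref{autococom} begins ``In light of the previous theorem, it is sufficient to prove that the centraliser of an irreducible automorphism of exponential growth rate is virtually cyclic,'' and then establishes virtual cyclicity exactly as you recall (iwip via \cite{BFH-Laminations0} and \cite{Kapovich}; toroidal via the geometric/pseudo-Anosov identification and \cite{McCarthy}). You merely make explicit two steps that the paper leaves to the reader: that an infinite cyclic subgroup of a virtually cyclic group has finite index, and the finite-union construction of $\mathcal{K}'$ transferring cocompactness from $C(\phi)$ to $\langle\phi\rangle$.
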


\subsection{Growth rate one}
In this subsection, we will cover the case of irreducible automorphisms of growth rate one (it is known that they have finite order). This class has been studied by Dicks and Ventura in \cite{Dicks-Ventura}, where they give an explicit description of any such automorphism.

\begin{nt}
We describe below two types of topological graphs and a graph map for each case:
\begin{itemize}\label{Types of autos}
	\item For the first type, let $p$ be an odd prime. In this case, we define a graph $X_p$ which has rank $p-1$. This graph is consisted of two vertices and $p$ edges which connect them. In order to describe the graph map is more convenient to identify the set of vertices $\{v_0,v_1\}$ with the set  $\{0, 1\}$ and the edge set $\{e_1,...,e_p\}$ with $\mathbb{Z}_p$, while all edges are oriented so that their initial vertex is $v_0$ and their terminal vertex $v_1$. We denote by $\alpha_p$ the graph map, which fixes the vertices and sends $e_i$ to $e_{i+1}$ (mod $p$).

\item For the second type, let $p,q$ two primes  where $p<q$ (here we don't assume that $p$ is odd). We define a graph  $X_{pq}$ of rank $pq-p-q+1$ which is consisted of $p+q$ vertices and $pq$ edges. More specifically, the set of vertices is consisted of two distinct subsets $\{v_1,\ldots,v_p\}$, $\{w_1,\ldots,w_q\}$ which can be naturally identified with $\mathbb{Z}_p \sqcup \mathbb{Z}_q$. Also, for every $i \in \mathbb{Z}_p$ and for every $j \in \mathbb{Z}_q$, there is a unique edge $e_{i,j}$ with initial vertex $v_i$ and terminal vertex $w_j$ and so we can naturally identify the set of edges with $\mathbb{Z}_{p} \times \mathbb{Z}_q$. We denote by $\alpha_{pq}$ the map which sends $e_{i,j}$ to $e_{i+1,j+1}$ (mod$p$, mod$q$, respectively). 
\end{itemize}
\end{nt}
Note that $X_p$, for an odd prime $p$, can be seen as an element of $CV_{p-1}$ by assigning length $1/(p-1)$ on each edge and by considering a marking $R_{p-1} \rightarrow X_p$. Similarly, $X_{pq}$ can be seen as an element of $CV_{pq-p-q+1}$. This is true even when $p=2$, as in this case $X_{2q} = X_q $ (as elements of $CV_{q-1}$ ). We consider this case separately, as the maps $\alpha_q$ and $\alpha_{2q}$ are different ($\alpha_{2q}$ inverts the orientation of each edge, while $\alpha_q$ preserves it).

We can now state the main result of \cite{Dicks-Ventura} (it is proved in Proposition 3.6, even if it is stated in a different form, the following formulation is evident by the proof):
\begin{prop}\label{Classification}
If $\phi$ is an irreducible automorphism of $F_N$ with $\lambda_{\phi} = 1$, then it can be represented by either $\alpha_p$ for some odd prime $p$ or $\alpha_{pq}$ for some primes $p<q$, as in Notation \ref*{Types of autos}.
\end{prop}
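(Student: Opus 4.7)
The plan is to realize $\phi$ by a finite-order simplicial isomorphism of a graph and then combinatorially classify the orbit structure using irreducibility.

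First, I would invoke Bestvina--Handel to produce an irreducible train-track representative $f\colon X \to X$ of $\phi$. The transition matrix of $f$ is a non-negative integer, irreducible matrix with Perron--Frobenius eigenvalue $\lambda_\phi = 1$, which by a standard argument must be a permutation matrix; hence $f$ is a simplicial isomorphism of $X$ of some finite order $n$, and its action on the edge set is a single cycle. Thus $X$ admits a finite-order isometry acting transitively on edges.

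I would next pin down the coarse structure of $X$. If some edge were a loop, transitivity would make all edges loops; connectedness and admissibility would then force $X$ to be a single rose, whereupon $\phi$ would permute the petals and preserve the free factor system $\{[\langle x_i\rangle]\}$, contradicting irreducibility. If some edge had both endpoints in a single vertex $f$-orbit, transitivity together with the single-edge-orbit condition would force a cyclic Cayley-type configuration in which every vertex has valence at most $2$, violating admissibility. Hence $X$ is bipartite: the two vertex $f$-orbits $V_1, V_2$ have sizes $a, b$, every edge goes between them, and $f$ cycles each side. The single edge orbit has size $\mathrm{lcm}(a,b)$, so connectedness forces $\gcd(a,b)=1$, the edges fill all of $K_{a,b}$, and $f$ acts as the diagonal shift $(v_i, w_j) \mapsto (v_{i+1}, w_{j+1})$ on $\mathbb{Z}_a \times \mathbb{Z}_b$.

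The main obstacle is establishing that $a$ and $b$ are prime. Suppose $a = a_1 a_2$ with $1 < a_1 < a$. The orbits of $f^{a_1}$ on $V_1$ partition it into $a_1$ classes of size $a_2$; each class together with $V_2$ spans a subgraph $Y_c \cong K_{a_2, b}$, and $f$ cyclically permutes $\{Y_c\}_{c=0}^{a_1-1}$. The delicate point is that the $Y_c$'s all share the vertex set $V_2$, so their fundamental groups are not automatically in free-factor position. One remedies this by passing to a suitable $f$-equivariant blow-up of $X$ that separates the shared $V_2$-vertices so that the lifts of the $Y_c$'s become pairwise disjoint subgraphs, whereupon $\{[\pi_1(Y_c)]\}_c$ becomes a genuine proper $\phi$-invariant free factor system of total rank $a_1(a_2-1)(b-1) < N$, contradicting irreducibility. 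A symmetric argument on $b$ completes the primality claim. The degenerate case $a = b = 1$ corresponds to the $X_p$ family: two fixed vertices joined by $p$ cyclically permuted parallel edges, and the same partitioning argument applied directly to the edges forces $p$ to be prime; the standing hypothesis $N \geq 2$ rules out $p = 2$, leaving $p$ odd. The non-degenerate bipartite case yields the $X_{pq}$ family with $p < q$ distinct primes, completing the classification.
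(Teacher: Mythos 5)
The paper does not actually prove this Proposition itself; it cites the classification of Dicks and Ventura and simply observes that their Proposition~3.6 yields the stated normal form. Your proposal is therefore an attempt at an independent, self-contained argument, which is worthwhile to examine on its own terms.

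The overall strategy --- a train track representative with Perron--Frobenius eigenvalue $1$ is a graph automorphism $f$ acting as a single cycle on edges, then classify such actions using irreducibility --- is sound, and your reduction to a permutation matrix and your blow-up argument to produce a proper invariant free factor system are both correct in spirit. However, there is a genuine gap in the step that establishes the bipartite structure. You claim that if some (non-loop) edge has both endpoints in a single vertex $f$-orbit, then ``a cyclic Cayley-type configuration'' forces valence at most $2$. This is false, because $f$ can have order strictly larger than the number of vertices, so the single edge orbit can hit the same vertex pair more than once. Concretely, take $m$ odd, vertices $v_0,\dots,v_{m-1}$, two parallel edges $e_j,e_j'$ from $v_j$ to $v_{j+1}$, and $f(v_j)=v_{j+1}$, $f(e_j)=e_{j+1}'$, $f(e_j')=e_{j+1}$: this has one vertex orbit, one edge orbit of size $2m$, every vertex of valence $4$, and an irreducible permutation transition matrix with eigenvalue $1$. (It represents a \emph{reducible} automorphism, but your argument never invokes irreducibility of $\phi$ at this step, only admissibility.) Worse, the same step would incorrectly \emph{exclude} part of the target classification: the admissible train track representative of $\alpha_{2q}$, for $q$ an odd prime, is the $2$-vertex, $q$-edge graph $X_q$ with $f$ swapping the two vertices and cyclically permuting the edges; this has a single vertex orbit of size $2$ and valence $q\geq 3$, and it is irreducible. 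So ruling out the single-orbit case entirely is not possible, and the $\alpha_{2q}$ family must emerge from it rather than from the bipartite branch (which, as you set it up, would put $\alpha_{2q}$ at $K_{2,q}$, a graph with valence-$2$ vertices that you have already excluded).

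A related, secondary issue appears in the genuinely bipartite branch: you assert the edge orbit has size $\mathrm{lcm}(a,b)$ and that ``the edges fill all of $K_{a,b}$,'' but a priori the edge orbit size is only a multiple of $\mathrm{lcm}(a,b)$, allowing parallel edges between the same pair of vertices. Again irreducibility (via a bigon/blow-up argument much like the one you already use) is what rules out these parallel-edge configurations when $a,b>1$, not the transitivity you appeal to. To repair the proof you need to (i) replace the valence-bound claim in the single-orbit case with an analysis that distinguishes the $2$-vertex multigraph with vertex-swap (legitimate, producing $\alpha_{2q}$) from larger single-orbit configurations (reducible, via the bigon free factor system), and (ii) rule out parallel-edge $K_{a,b}$ in the bipartite case by the same mechanism.
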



We need the following lemma:
\begin{lem}\label{graph auto}
Let $\phi$ be an irreducible automorphism of $CV_N$ with $\lambda_{\phi} = 1$. Let $S$ be a point of $Min(\phi)$ and $f : S \to S$ be an optimal representative of $\phi$. Then $f$ is an isometry on $S$, and hence a graph automorphism.  Morever, $f$ is the unique optimal map representing $\phi$ on $S$. 
\end{lem}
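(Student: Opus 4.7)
The plan is to combine two inputs: (i) by the Dicks--Ventura classification (Proposition~\ref{Classification}), an irreducible $\phi$ with $\lambda_\phi = 1$ has finite order in $\Out(F_N)$, so some $\phi^n$ represents the identity outer automorphism; and (ii) since $\Lambda(S, S\phi) = \lambda_\phi = 1$, the equality case of $\Lambda \ge 1$ already forces $S\phi = S$ as points of $CV_N$. Being an optimal representative, $f$ has Lipschitz constant exactly $1$. Thus for every conjugacy class $[\alpha]$, $\ell_S([\phi(\alpha)]) \le \ell_S([\alpha])$; iterating this $n$ times around the loop $\phi^n = 1$ forces every intermediate inequality to be an equality, so $\phi$ preserves the translation length of every conjugacy class on $S$.

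I would then upgrade this to edge-level length preservation via candidates. For a candidate loop $\gamma = e_1 \cdots e_k$, the image edge-path $f(\gamma)$ has combinatorial length $\sum_i |f(e_i)| \le \sum_i \ell_S(e_i) = \ell_S([\gamma])$, while after reduction it represents $\phi([\gamma])$ and so has length at least $\ell_S([\phi(\gamma)]) = \ell_S([\gamma])$. Forcing equality throughout yields simultaneously that $f(\gamma)$ is already reduced and that $|f(e_i)| = \ell_S(e_i)$ for every edge $e_i$ in $\gamma$. Because $S$ has no valence-one or valence-two vertices, every edge of $S$ appears in some candidate: in an embedded cycle if it is non-separating, and otherwise in the connecting arc of a barbell (the valence hypothesis preventing either side of a bridge from being a tree). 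Hence $|f(e)| = \ell_S(e)$ for every edge $e$ of $S$.

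A volume-multiplicity count then produces the isometry. Writing $m(e')$ for the number of times the edge $e'$ is traversed across all images $\{f(e)\}$, one has $1 = \sum_e |f(e)| = \sum_{e'} m(e') \ell_S(e')$, while $m(e') \ge 1$ for every $e'$ since $f$ is a homotopy equivalence (if $f(S)$ avoided some $e'$ then the image would sit inside a proper subgraph, and $f_*$ would fail to be surjective on $\pi_1$; the no-valence-two hypothesis again rules out the degenerate bridge situation). So $m(e') = 1$ uniformly, which together with $|f(e)| = \ell_S(e)$ forces each $f(e)$ to be a single edge of length $\ell_S(e)$. Thus $f$ is an isometric graph automorphism.

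For uniqueness, any other optimal representative $g$ is also an isometry by the same argument. Lifting to the universal cover $\tilde S$, $\tilde f$ and $\tilde g$ are $F_N$-equivariant isometries with respect to lifts $\Phi$ and $\Phi'$ of $\phi$; after adjusting by an inner automorphism we may take $\Phi = \Phi'$, and then $\tilde f \circ \tilde g^{-1}$ commutes with the action of every element of $F_N$ on $\tilde S$, hence is trivial since $Z(F_N) = 1$ for $N \ge 2$. Descending to $S$ gives $f = g$. The main obstacle I expect is the candidate-covering step in the bridge case, which is exactly where the no-valence-two hypothesis on $S$ must be marshalled; the remaining pieces are careful bookkeeping with the Lipschitz and stretching inequalities.
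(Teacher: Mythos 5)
Your proof is correct and follows essentially the same route as the paper: first observe that $\lambda_\phi=1$ forces $S\phi = S$ so $\phi$ preserves translation lengths on $S$, then upgrade this together with the $1$-Lipschitz condition to an isometry, and finally lift to the universal cover for uniqueness. The paper compresses the middle step to ``from there it follows easily that $f$ is an isometry,'' so your candidate-and-volume-multiplicity argument is a useful elaboration rather than a different method; note however that the detour through the Dicks--Ventura finite-order result and the iteration $\phi^n=1$ is superfluous, since $S\phi = S$ already gives length preservation directly.

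One small imprecision in the uniqueness step: from the fact that $\tilde f \circ \tilde g^{-1}$ is an $F_N$-equivariant isometry of $\tilde S$ you cannot conclude triviality merely ``since $Z(F_N)=1$'' --- a priori this composite is a lift of $f g^{-1}\simeq \mathrm{id}_S$, not a deck transformation, so it need not lie in $F_N$ at all. What is actually needed is the Culler--Morgan uniqueness of equivariant isometries of minimal irreducible $F_N$-trees (equivalently: the composite preserves every axis, commuting with a positive translation forces it to act by a fixed translation on each axis, and overlapping axes force all these translations to vanish). The paper cites Culler--Morgan for exactly this; your argument reaches the same place but should invoke that input rather than the triviality of the centre of $F_N$.
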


\begin{proof}
Note that as $\lambda_{\phi}=1$,  $Min(\phi)$ is simply the set of fixed points of $\phi$ in $CV_N$. Hence for any loop, $\gamma$, the lengths of $\gamma$ and $f(\gamma)$ are the same (as loops) in $S$, and from there it follows easily that $f$ is an isometry. 	

We can lift this isometry to the universal cover and invoke \cite{CM}, to conclude that this isometry is unique (up to a covering translation), and hence all optimal maps are the same on $Min(\phi)$.

Alternatively, notice that two graph automorphisms give rise to the same action on the associated simplex if and only if the graph maps are the same. 
\end{proof}

\begin{rem*}
	Note that this uniqueness statement is definitely false when $\lambda_{\phi} > 1$; see \cite{FM18I}, Example 3.14.
\end{rem*}


We can now prove our result for the finite order irreducible automorphisms of $F_N$. 


\lambdaonefix 

Note that the existence of such a point $T$ is proved in \cite{Dicks-Ventura}; the content of this Theorem is the uniqueness of such a point. 
\begin{proof}
As note above, the fact that $\lambda_{\phi} = 1$ implies that $Min(\phi) = Fix(\phi)$.

By applying Proposition \ref{Classification}, we get that $\phi$ can be represented by either $\alpha_p$ or $\alpha_{pq}$ (for some primes $p,q$) as an isometry of $X_p$ or $X_{pq}$ (as in notation \ref*{Types of autos}), respectively. For the rest of the proof we will denote the graph map by $\alpha$ and the graph by $T$. In particular, $\Delta_T = \Delta$, then $\phi$ fixes the centre (i.e. if we assign the same length to every edge of $T$) of $\Delta$, as an element of $CV_N$. We will prove that $T$ (with the metric given as above) is the unique fixed point of $CV_N$ for $\phi$.

By the second assertion of Theorem \ref{Connectivity}, $Fix(\phi) = Min(\phi)$ is connected by simplicial paths in $CV_N$. Now consider some other point, $S \in CV_N$ fixed by $\phi$. If we connect $S$ and $T$ by a (simplicial)  path in $Fix(\phi)$, we will be able to produce a point $T' \in Fix(\phi)$ such that either $\Delta'$ is a face of $\Delta$ or vice versa (where $\Delta'$ is the simplex defined by $T'$). 

However, it is clear that $\phi$ cannot fix any other point of $\Delta$; this is because $\alpha$ acts as a cyclic permutation of the edges of $T$, and hence the only metric structure that can be preserved assigns the same length to every edge. Therefore, $\Delta$ must be a face of $\Delta'$. 

We will aim to show that $\Delta$ must be equal to $\Delta'$, which will prove our result. (Since then the only possibilty left is that $S=T$.)

Since $\Delta$ is a face of $\Delta'$, there exists a forest, $F$, whose components are collapsed to produce $T$ from $T'$ (as graphs, absent the metric structure). However, the connectivity of $Fix(\phi)$ allows us to connect $T$ to $T'$ - as metric graphs - via a Euclidean segment in the closure of $\Delta'$ (see the first assertion of \ref{Connectivity}). Thus, without loss of generality, we may assume that the optimal map representing $\phi$ on $T'$ - call this $\alpha'$ - leaves $F$ invariant, since $\alpha'$ is an isometry (by Lemma \ref{graph auto}), and so if the volume of $F$ is sufficiently small, it must be sent to itself.

Our goal is to show that, under the assumption that $T'$ has no valence one or two vertices, each component of $F$ is a vertex. Hence $T=T'$.

Now if we ignore the metric structures we get that on collapsing $F$, $\alpha'$ induces a graph map on the quotient, which is $T$ (as a graph). Since this must also represent $\phi$, we deduce that this induced map is equal to $\alpha$. (Alternatively, collapse $F$ and assign the same length to the surviving edges. The map $\alpha'$ induces an isometry of this graph which is equal to $\alpha$, by \ref{graph auto}.)


Let $G$ denote the cyclic group generated by $\alpha$, acting on $T$. By the comments above, $G$ has an action on $T'$ so that collapsing components of $F$ gives rise to the original action on $T$.

For the remainder of the proof, the edges of $F$ will be called \textit{black}, the edges of the complement of $F$ will be called \textit{white} and vertices that are incident to both and white edges will be called \textit{mixed}. Accordingly, vertices have {\em black} valence and {\em white} valence respectively. 

Now consider a component $C$ of $F$. Note that we can think of $C$ as a {\em vertex} of $T$. Let $v_1, \ldots, v_k$ be the leaves of $C$; that is, those vertices of $C$ with black valence equal to $1$. Since $T'$ can admit no valence one vertices (if we count both black and white edges), each $v_i$ is mixed, incident to both black and white edges. Let $\partial C$ denote the boundary of $C$ in $T'$; that is, the edges (necessarily white) of $T'$ connecting some $v_i$ to a vertex not in $C$. 

Since a vertex stabiliser of $T$ in $G$ acts freely and transitively on the edges incident to it, we deduce that $Stab(C)$ (the set-wise stabiliser) acts freely and transitively on $\partial  C$. Note that the transitive action on $\partial C$ implies that $Stab(C)$ must also act transitively on the leaves, $v_1, \ldots, v_k$.

Now $Stab(C)$ has prime order (since vertex stabilisers in $T$ have prime order), so using the orbit-stabiliser theorem, we deduce that either $k=1$ or $Stab(C)$ acts freely and transitively on $v_1, \ldots, v_k$. In the former case, $C$ consists of a single vertex. In the latter case we get that $k = | \partial C| = |Stab(C)|$. This implies that the white valence of each $v_i$ is equal to $1$. But now the valence of each $v_i$ is exactly $2$, leading to our desired contradiction. 

\medskip

Note that this argument doesn't quite work in the cases where $T$ has a vertex of valence 2 - in the second case of Notation \ref{Types of autos} when $p=2$. Here we get vertices of valence 2 since the graph map $\alpha$ acts as a cyclic permutation on the edges along with an inversion, and we subdivide at the midpoints of edges which are fixed. 

But in this case, the valence 2 vertices are a notational convenience, and we can omit them from $T$, and reach the same conclusion with the same argument.

\end{proof}

The following corollary is now immediate:

\centrallambdaonefinite

\begin{proof}
	It follows immediately by the previous Theorem, by noting that $Min(\phi)$ is $C(\phi)$-invariant.
\end{proof}
In fact, the graph $T$ in the previous corollary, is some graph as in Notation \ref*{Types of autos}, we can get a much more precise description of the centaliser in each case. 
\begin{cor}
If $\phi$ is irreducible automorphism of $F_N$ of growth rate one, then $C(\phi)$ fixes a point $X$, where $X$ is as in Notation \ref{Types of autos}. As a consequence, $C(\phi) = <\phi> \times <\sigma>$, where $\sigma$ is the order two automorphism of $F_N$ that is induced by the graph map of $X$ sending every edge to its inverse.
\end{cor}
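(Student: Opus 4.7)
The plan is to combine the preceding corollary with the explicit description of $X$ supplied by Proposition \ref{Classification} and a direct computation in the automorphism group of the combinatorial graphs of Notation \ref{Types of autos}. By the preceding corollary, $C(\phi)$ fixes some point $T\in CV_N$, and by the preceding Theorem this fixed point is unique; by Proposition \ref{Classification} it is precisely one of the graphs $X_p$ or $X_{pq}$, endowed with equal edge lengths. Since $X$ has no valence-one or valence-two vertices (when $p=2$ in $X_{pq}$ one first passes to $X_q$), the point stabiliser of $X$ in $Out(F_N)$ coincides with the combinatorial automorphism group $Aut(X)$, so $C(\phi)$ is identified with the centraliser of $\alpha$ in $Aut(X)$, where $\alpha$ is the graph representative of $\phi$.

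In the case $X=X_p$ for an odd prime $p$, the graph has two vertices and $p$ parallel edges, so $Aut(X_p)\cong S_p\times\langle\sigma\rangle$, where $S_p$ permutes the edges and $\sigma$ swaps the two vertices (equivalently, inverts every edge). The representative $\alpha_p$ is a $p$-cycle in $S_p$; since a cyclic subgroup of prime order is self-centralising in $S_p$, and since $\sigma$ plainly commutes with $\alpha_p$, one obtains $C_{Aut(X_p)}(\alpha_p)=\langle\alpha_p\rangle\times\langle\sigma\rangle$, which is the claimed description.

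In the case $X=X_{pq}$ with $p<q$ both odd primes, the graph is the complete bipartite graph $K_{p,q}$, whose two vertex classes have different sizes; hence there is no edge-inversion swap available and $Aut(X_{pq})=S_p\times S_q$. The map $\alpha_{pq}$ corresponds to $(c_p,c_q)$, so the same self-centralising argument yields $C_{Aut(X_{pq})}(\alpha_{pq})=\langle c_p\rangle\times\langle c_q\rangle=\langle\alpha_{pq}\rangle$ by coprimality of $p$ and $q$, so $C(\phi)=\langle\phi\rangle$; here $\sigma$ is trivial and the claimed formula holds vacuously.

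The main obstacle, and where I expect to spend the most effort, is the $p=2$ subcase $X=X_{2q}$, realised as $X_q$ in $CV_{q-1}$. Here the edge-inversion $\sigma$ does exist as an order-two graph automorphism of $X_q$; a direct calculation shows that $\alpha_{2q}$, viewed on $X_q$, already swaps the two vertices while cyclically shifting and inverting the edges, so that the odd power $\alpha_{2q}^q$ coincides with $\sigma$. Hence $\sigma\in\langle\phi\rangle$ and the formula $C(\phi)=\langle\phi\rangle\times\langle\sigma\rangle$ again holds. Assembling the three cases completes the proof.
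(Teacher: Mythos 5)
The paper gives no proof of this corollary, so there is no argument to compare against; judged on its own terms, your proposal takes the natural route and the core computations are correct. Identifying $C(\phi)$ with $C_{\operatorname{Aut}(X)}(\alpha)$ via the stabiliser of the unique fixed point $T$ (using the preceding corollary and theorem), and then computing centralisers in $\operatorname{Aut}(X_p)\cong S_p\times\langle\sigma\rangle$ and $\operatorname{Aut}(X_{pq})\cong S_p\times S_q$ respectively, is exactly the right strategy, and your case computations (self-centralising $p$-cycle in $S_p$, coprimality giving $\langle c_p\rangle\times\langle c_q\rangle=\langle\alpha_{pq}\rangle$, the identity $\alpha_{2q}^q=\sigma$ on $X_q$) all check out.

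That said, your handling of the conclusion in the last two cases is too generous to the corollary as stated. When $p,q$ are both odd, the graph map sending every edge to its inverse would have to swap the $v_i$'s and $w_j$'s, which is impossible since $|\{v_i\}|=p\neq q=|\{w_j\}|$; so $\sigma$ does not exist, and it is not an ``order two automorphism'' that is ``trivial'' --- the phrase ``holds vacuously'' is papering over the fact that the advertised formula $C(\phi)=\langle\phi\rangle\times\langle\sigma\rangle$ is simply not a meaningful statement there. In the $p=2$ case you correctly observe $\sigma=\alpha_{2q}^q\in\langle\phi\rangle$, but then $\langle\phi\rangle\cap\langle\sigma\rangle=\langle\sigma\rangle\neq 1$, so the product is not direct and the formula likewise does not literally hold; the correct conclusion in both $X_{pq}$ cases is $C(\phi)=\langle\phi\rangle$. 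So what your (correct) computation actually shows is that the corollary as phrased needs to be read as $C(\phi)=\langle\phi,\sigma\rangle$, with $\sigma$ taken to be the edge-inverting automorphism when it exists as a graph map and otherwise omitted; it would strengthen your write-up to say this plainly rather than asserting that the direct-product formula ``again holds.'' You should also spell out, even briefly, why the point stabiliser of the barycentre of $\Delta_X$ in $\operatorname{Out}(F_N)$ is identified with $\operatorname{Aut}(X)$ and why that identification restricts to $C(\phi)\cong C_{\operatorname{Aut}(X)}(\alpha)$ --- this uses injectivity of $\operatorname{Aut}(X)\to\operatorname{Out}(\pi_1 X)$ for graphs without valence-one or valence-two vertices, together with Lemma \ref{graph auto} to pin down $\alpha$ as the unique representative of $\phi$ on $X$.
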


\end{document}